\DeclareMathOperator{\Hom}{Hom}
\DeclareMathOperator{\alt}{Alt}
\DeclareMathOperator{\der}{Der}
\newtheorem{thm}{Theorem}
 \newtheorem{lemma}[thm]{Lemma}
\numberwithin{equation}{section}
\renewcommand{\footnote}{\endnote}
\newcommand{\ignore}[1]{}\makeglossary
\begin{document}
	\title[Generating graph]{Finite groups with planar generating graph}

	\author{Andrea Lucchini}
	\address{Andrea Lucchini\\ Universit\`a degli Studi di Padova\\  Dipartimento di Matematica \lq\lq Tullio Levi-Civita\rq\rq\\ Via Trieste 63, 35121 Padova, Italy\\email: lucchini@math.unipd.it}

\begin{abstract} Given a finite  group $G$,  the generating graph  $\Gamma(G)$ of $G$ has as  vertices the  non-identity elements of $G$ and two vertices  are adjacent if and only if they are distinct and generate $G$ as group elements. Let $G$ be a 2-generated finite group. We prove that $\Gamma(G)$ is planar if and only if 	$G$ is isomorphic to one of the following groups: $C_2, C_3, C_4, C_5, C_6, C_2 \times C_2, D_3, D_4, Q_8, C_4\times C_2,  D_6.$	\end{abstract}
	\maketitle
	
\section{Introduction}

Given a finite  group $G$,  the generating graph  $\Gamma(G)$ of $G$ has as  vertices the non-identity elements of $G$ and two vertices  are adjacent if and only if they are distinct and generate $G$ as group elements. 

When $G$ is simple  many deep results on generation of $G$ in the literature can be translated to results about $\Gamma(G)$. For example, the  property   that $G$ can be generated by two elements amounts to saying that $\Gamma(G)$ has at least one edge. The fact due to  Guralnick and Kantor in \cite{GK} that every nontrivial element of $G$ belongs to a generating pair of elements of $G$ is equivalent to saying that $\Gamma(G)$ has no isolated vertices.   More recently, Breuer, Guralnick and Kantor  proved in \cite{BGK} that $G$ has spread at least 2, or in other words $\Gamma(G)$ has diameter at most 2.

More generally, one can try to characterise finite groups $G$ for which a given graph-theoretical property holds in  $\Gamma(G)$. 

As an illustration, recall that a graph $\Gamma$ is Hamiltonian (respectively, Eulerian) if it contains a cycle going through every vertex (respectively, edge) of $\Gamma$ exactly once. 
In \cite{BGLMN},  Breuer, Guralnick, Mar\'{o}ti, Nagy and the first author have investigated  the finite groups $G$ for which $\Gamma(G)$ is Hamiltonian. For example they showed that every finite simple group  of large enough order has an Hamiltonian generating graph. In \cite{marion}, Marion and the first author have studied generating graphs for the alternating and symmetric groups on $n$ points proving that they are Eulerian if and only if $n$ and $n-1$ are
not a prime congruent to 3 modulo 4.

The aim of this note it to determine the finite groups $G$ with the property that $\Gamma(G)$ is a planar graph. Recall that a graph is said to be embeddable in the plane, or planar, if it can be drawn in the plane so that its edges intersect only at their ends. If $G$ cannot be generated with two elements, then $\Gamma(G)$ contains no edge and all the vertices are isolated. So we may restrict our attention to the finite groups that can be generated by two elements. We prove that there are only finitely many 2-generated finite groups $G$ such that $\Gamma(G)$ is planar. More precisely we have:

\begin{thm}\label{main}
	Let $G$ be a finite 2-generated  group. Then $\Gamma(G)$ is planar if and only if
	$G\in \{C_2, C_3, C_4, C_5, C_6, C_2\times C_2, D_4, Q_8, C_4\times C_2, D_3, D_6\},$
where, as usual, with $C_n$ we denote the cyclic group of order $n$, with $D_n$ the dihedral group of order 2n and with $Q_8$  the quaternion group.\end{thm}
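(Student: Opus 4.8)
The plan is to treat the two directions of the biconditional separately, putting essentially all the work into the ``only if'' direction. For the ``if'' direction I would simply verify planarity for each of the eleven groups listed: all have order at most $12$, so $\Gamma(G)$ has at most eleven vertices and can be drawn explicitly. For instance, for a prime $p$ every nonidentity element of $C_p$ already generates, so $\Gamma(C_p)=K_{p-1}$, which is planar precisely when $p\le 5$; a direct check gives $\Gamma(C_6)\cong K_5$ with one edge deleted (planar), while the three relevant groups of order $8$ each yield a sparse, easily exhibited graph. This disposes of sufficiency.

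For necessity the strategy is an edge count played off against Kuratowski's bound. Let $\phi(G)$ denote the number of ordered pairs $(x,y)$ with $\langle x,y\rangle=G$, and set $\prob(G)=\phi(G)/|G|^2$. The number of edges of $\Gamma(G)$ is $\tfrac12(\phi(G)-\lambda)$, where $\lambda=\varphi(|G|)$ if $G$ is cyclic and $\lambda=0$ otherwise. A simple planar graph on $n=|G|-1\ge 3$ vertices has at most $3n-6$ edges, so planarity of $\Gamma(G)$ forces $\phi(G)\le 6|G|-18+\lambda<7|G|$. Hence it suffices to prove that \emph{only finitely many $2$-generated finite groups satisfy $\phi(G)\le 7|G|$}, i.e.\ that $|G|\,\prob(G)\to\infty$ as $|G|\to\infty$; once $|G|$ is bounded, the classification becomes a finite check.

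The heart of the argument is therefore this uniform growth estimate, and it is the step I expect to be the \textbf{main obstacle}. The first reduction is that generation is detected modulo the Frattini subgroup, whence $\prob(G)=\prob(G/\frat(G))$; writing $H=G/\frat(G)$ (a $2$-generated Frattini-free group) gives $|G|\,\prob(G)=|\frat(G)|\cdot|H|\,\prob(H)$, and since $2$-generation forces $\prob(H)>0$, boundedness of $|G|\,\prob(G)$ forces both $|\frat(G)|$ and $|H|\,\prob(H)$ bounded, reducing everything to finiteness for Frattini-free $H$. The delicate point is that one \emph{cannot} simply bound $\prob(H)$ below by a constant: taking $H=C_N\times C_N$ with $N$ squarefree produces $2$-generated Frattini-free groups with $\prob(H)\to 0$ (the product $\prod_{p\mid N}(1-1/p)$ diverges to $0$). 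So the growth must come entirely from the factor $|H|$, and proving $|H|\,\prob(H)\to\infty$ requires genuine structural input. I would obtain it from the maximal-subgroup expansion $1-\prob(H)\le\sum_{M}[H:M]^{-2}$ together with crown-type bounds on the number of maximal subgroups of each index: the $2$-generation hypothesis caps how many maximal subgroups of a given small index can occur, which prevents $\prob(H)$ from decaying faster than a fixed negative power of $|H|$, and this is exactly enough to beat the single factor $|H|$.

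With $|G|$ bounded, I would finish by inspection, using one labour-saving reduction. By Gaschütz's lemma every generating pair of a quotient $G/N$ lifts to a generating pair of $G$, so a subdivision of $K_5$ or $K_{3,3}$ in $\Gamma(G/N)$ can be transferred to $\Gamma(G)$ by inflating each branch vertex inside its $N$-coset and routing the subdivided paths through generating lifts; consequently planarity passes to quotients. It is then enough to identify the small non-planar quotients that arise (such as $C_7$, $C_3\times C_3$, and the smallest non-abelian examples beyond the list) and to exhibit, for every remaining candidate of bounded order that is not among the eleven, an explicit $K_5$- or $K_{3,3}$-subdivision, while confirming directly that each of the eleven groups has planar generating graph. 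The only bookkeeping in the lifting step is the simultaneous choice of coset representatives compatible with all edges incident to a branch vertex, which is immediate when $|N|$ is large and is checked by hand in the few remaining small cases.
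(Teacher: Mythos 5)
Your overall architecture (the $3n-6$ planarity bound, hence $|G|\prob(G)$ bounded, hence finiteness, hence inspection) matches the paper's, and you have correctly isolated the crux: showing that only finitely many $2$-generated groups satisfy $\phi(G)\le 7|G|$. The genuine gap is that the tool you propose for this crux cannot prove it. The union bound $1-\prob(H)\le\sum_M[H:M]^{-2}$ gives a lower bound on $\prob(H)$ only when the right-hand side is less than $1$, and it is vacuous otherwise; for precisely the delicate family you yourself exhibit, $H=C_N\times C_N$ with $N$ squarefree, the sum is $\sum_{p\mid N}(p+1)p^{-2}\approx\sum_{p\mid N}p^{-1}$, which is unbounded as the number of prime divisors grows (it must be: it is at least $1-\prob(H)\to 1$). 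So no cap on the number of maximal subgroups of each index can make this \emph{additive} inequality yield ``$\prob(H)$ decays slower than a fixed power of $|H|$''; one needs a \emph{multiplicative} mechanism. That is exactly what the paper uses: by Gasch\"utz, $P_G(2)$ factors along a chief series $1=N_t\le\dots\le N_0=G$, so that $|G|P_G(2)=\prod_i\alpha_i$ with $\alpha_i=|N_{i-1}/N_i|\,P_{G/N_i,N_{i-1}/N_i}(2)$, and Lemmas \ref{uno}, \ref{due}, \ref{tre} give per-factor lower bounds: $\alpha_i\ge 1$ always; $\alpha_i>35$ for a non-abelian factor with soluble quotient above it (forcing $G$ soluble); and for abelian factors $\alpha_i\ge p^a-p^{a-1}$, with the exceptional values $\alpha_i=1$ and $\alpha_i=3/2$ characterized exactly ($|N|=2$, complemented, according to whether $G/N$ has a quotient of order $2$). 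Two-generation then caps how many small factors can occur (e.g.\ $C_2^3$ cannot be a quotient), which bounds the chief length and the factor sizes and produces the explicit list. Your sketch contains no substitute for this per-factor analysis, and without it neither the finiteness claim nor an effective bound on $|G|$ (which your concluding ``finite check'' requires) is established.

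A second, lesser gap is the quotient-lifting reduction. Gasch\"utz's lemma lifts one edge of $\Gamma(G/N)$ at a time, with coset representatives depending on that edge; a $K_5$-subdivision needs a \emph{single} representative at each branch vertex compatible with all four incident paths, and this can fail pointwise: in $G=C_6$ with $|N|=2$, the coset $gN=\{g,g^4\}$ is a vertex of $\Gamma(G/N)\cong\Gamma(C_3)$, yet the representative $g^4$ has no neighbour whatsoever in the coset $g^2N$, so not every choice of representative extends to an edge lift. Your assertion that the bookkeeping is ``immediate when $|N|$ is large'' is unsubstantiated, and the statement that non-planarity of $\Gamma(G/N)$ forces non-planarity of $\Gamma(G)$ is left unproved. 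The paper avoids this issue entirely: beyond the $K_5$ argument for cyclic groups of order at least $7$, the few remaining candidates ($C_2\times C_2\times C_3$, $C_3\times C_3$, $\alt(4)$) are excluded by counting edges against Theorem \ref{crit} directly. (A cosmetic point: with $\phi(G)$ counting all ordered generating pairs, the correct relation is $\phi(G)=2e(G)+3\lambda$, since the pairs $(1,y)$, $(y,1)$ and $(y,y)$ all generate when $y$ does; this is harmless for your asymptotics.)
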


We will prove the previous theorem showing that, with only finitely many exceptions, the number $e(G)$ of edges of $\Gamma(G)$ is at least $3|G|.$ Some crucial preliminary results in this direction will be proved in Section \ref{prel}, where we will compare
the ratios  $e(G)/|G|$ and $e(G/N)/|G/N|$ in the particular case when $N$ is a minimal normal subgroup of $G.$

\section{Preliminary results}\label{prel}
Let $N$ be a normal subgroup of a finite group $G$ and choose $g_1,\dots,g_k\in G$ with the property that $G=\langle g_1,\dots,g_k\rangle N.$ By a result of Gasch{\"u}tz \cite{g1} the cardinality of the set $$\Phi_N(g_1,\dots,g_n)=\{(n_1,\dots,n_k)\in N\mid \langle g_1n_1,\dots,g_kn_k=G\rangle \}$$ does not depend on the choice of $g_1,\dots,g_k.$ Let
$$P_{G,N}(k)=\frac{|\Phi_N(g_1,\dots,g_n)|}{|N|^k}.$$ 
Denote by $P_X(k)$ the probability that $k$ randomly chosen elements of $X$ generate the finite group $X$. Notice that if $X$ is a finite 2-generated group, then the number $e(X)$ of the edges of $\Gamma(X)$ is $|X|^2P_G(X)/2.$
It turns out that $P_G(k)=P_{G/N}(k)P_{G,N}(k)$, and $P_{G,N}(k)$ is the conditional probability that $k$-randomly chosen elements of $G$ generate $G$ given that they generate $G$ modulo $N$.

For the remainder of this section we will assume that $G$ is a 2-generated group finite and  $N$ is a minimal normal subgroup of $G$ and we will define $$\alpha(G,N):=\frac{e(G)/|G|}{e(G/N)/|G/N|}=|N|P_{G,N}(2).$$

\begin{lemma}\label{uno}If $N$ is non-abelian, then $\alpha(G,N)\geq 1.$
	\end{lemma}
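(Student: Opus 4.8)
The plan is to reduce the inequality to a counting statement about generating pairs and then exploit a free conjugation action of $N$. Since $\alpha(G,N)=|N|P_{G,N}(2)$ by definition, it suffices to prove that $P_{G,N}(2)\ge 1/|N|$. I would fix elements $g_1,g_2\in G$ with $\langle g_1,g_2\rangle N=G$ and consider the set
$$\Psi=\{(x_1,x_2): x_1\in g_1N,\ x_2\in g_2N,\ \langle x_1,x_2\rangle=G\}.$$
Writing $x_i=g_in_i$ identifies $\Psi$ with $\Phi_N(g_1,g_2)$, so that $|\Psi|=|N|^2P_{G,N}(2)$ and the goal becomes $|\Psi|\ge |N|$.

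First I would guarantee that $\Psi\ne\emptyset$. Because $G$ is 2-generated there is a genuine generating pair $(h_1,h_2)$ of $G$, and this pair certainly satisfies $\langle h_1,h_2\rangle N=G$. By the result of Gasch\"utz the cardinality $|\Phi_N(g_1,g_2)|$ does not depend on the chosen pair, so I may compute with $(g_1,g_2)=(h_1,h_2)$; then $(h_1,h_2)\in\Psi$ and $\Psi$ is non-empty.

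Next I would let $N$ act on $\Psi$ by simultaneous conjugation, $(x_1,x_2)^w=(x_1^{\,w},x_2^{\,w})$ for $w\in N$. This action is well defined: conjugation is an automorphism of $G$, so $\langle x_1^{\,w},x_2^{\,w}\rangle=G$, and since $N\trianglelefteq G$ each $x_i^{\,w}$ again lies in the coset $g_iN$. Here the hypothesis enters through the structure of $N$: a non-abelian minimal normal subgroup is a direct product of non-abelian simple groups, whence $Z(N)=1$. If $w$ fixes $(x_1,x_2)$, then $w$ centralises $\langle x_1,x_2\rangle=G$, so $w\in C_N(G)\le C_N(N)=Z(N)=1$. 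Thus every stabiliser is trivial and every $N$-orbit on $\Psi$ has length exactly $|N|$.

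Finally, combining non-emptiness with the orbit count gives that $|\Psi|$ is a positive multiple of $|N|$, hence $|\Psi|\ge|N|$ and $P_{G,N}(2)=|\Psi|/|N|^2\ge 1/|N|$, yielding $\alpha(G,N)\ge1$. The two verifications in the penultimate step are the delicate points: that conjugation by $N$ preserves the cosets $g_iN$, so that the action really lands in $\Psi$, and the freeness of the action. I expect the freeness (stabiliser) computation to be the crux, since this is exactly where the non-abelian hypothesis is used via $Z(N)=1$; for abelian $N$ the centraliser argument collapses and the conclusion can genuinely fail.
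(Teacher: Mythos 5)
Your proof is correct and is essentially the same argument as the paper's: the paper realises your free conjugation action concretely as the injective map $n \mapsto ([g_1,n],[g_2,n])$ from $N$ into $\Phi_N(g_1,g_2)$, with injectivity established by exactly your stabiliser computation. The only cosmetic difference is that you kill the stabiliser via $C_N(G)\le Z(N)=1$ while the paper uses $N\cap Z(G)=1$; both facts follow from minimality of $N$ together with non-abelianness, so the two proofs coincide in substance.
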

\begin{proof}
Assume $G=\langle g_1,g_2\rangle$ and let $n\in N.$ We have $$G=\langle g_1^n, g_2^n\rangle=\langle g_1[g_1,n],g_2[g_2,n]\rangle,$$ hence $([g_1,n],[g_2,n])\in \Phi_N(g_1,g_2).$ On the other hand, if $n_1, n_2\in N$ and
 $$([g_1,n_1],[g_2,n_1])= ([g_1,n_2],[g_2,n_2])),$$ then $n_1n_2^{-1}\in C_G(g_1)\cap C_G(g_2)=Z(G).$ Since $N$ is non-abelian, it must be $n_1n_2^{-1}\in N\cap Z(G)=1,$ so $n_1=n_2.$ We deduce that $|\Phi_N(g_1,g_2)|\geq |N|$, and consequently $\alpha(G,N)\geq 1.$
 \end{proof}

\begin{lemma}\label{due}
	If $N$ is non-abelian and $G/N$ is soluble, then $\alpha(G,N)> 35.$
\end{lemma}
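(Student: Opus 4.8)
The plan is to sharpen the argument of Lemma~\ref{uno} by exploiting the non-abelian structure of $N$ more aggressively. Since $N$ is a minimal normal non-abelian subgroup, $N\cong S^m$ for some non-abelian simple group $S$ and some $m\geq 1$, and the conjugation action of $G$ permutes the $m$ simple factors transitively. The quantity to bound from below is $\alpha(G,N)=|N|P_{G,N}(2)$, so I would aim to show $P_{G,N}(2)>35/|N|$.

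Let me look at the core of the excerpt's proof of Lemma~\ref{uno}: it produces, for each $n\in N$, the pair $([g_1,n],[g_2,n])\in\Phi_N(g_1,g_2)$, and the map $n\mapsto([g_1,n],[g_2,n])$ is injective because $N\cap Z(G)=1$. This gives $|\Phi_N(g_1,g_2)|\geq|N|$, i.e. $\alpha\geq 1$. To get $\alpha>35$ I would enlarge the supply of generating lifts. First I would fix a good pair $(g_1,g_2)$ with $G=\langle g_1,g_2\rangle$ (so in particular $G=\langle g_1,g_2\rangle N$), and try to count lifts $(g_1n_1,g_2n_2)$ that still generate $G$ directly, rather than only those arising from simultaneous conjugation. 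The natural tool is the explicit lower bound on $P_{G,N}(2)$ available when $N$ is a non-abelian chief factor: results of the type in the crown/chief-factor theory (and the generation probability estimates for almost simple and diagonal-type primitive groups going back to Guralnick--Kantor and refined by the first author and others) give $P_{G,N}(2)\geq 1-c/|S|$ or a similar explicit constant bounded away from $0$, uniformly over all non-abelian simple $S$. Combined with $|N|=|S|^m\geq 60$ (since the smallest non-abelian simple group is $A_5$ of order $60$), this should already force $\alpha(G,N)=|N|P_{G,N}(2)>35$ whenever $P_{G,N}(2)$ is bounded below by a constant exceeding $35/60$.

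The role of the hypothesis that $G/N$ is soluble is, I expect, to guarantee that $N$ is the \emph{unique} minimal normal subgroup of the relevant shape and to control the primitive monolithic quotient $L=G/C_G(N)$, which is then a primitive permutation group of almost simple or diagonal type acting on the factors. Solubility of $G/N$ rules out the twisted wreath / product-action complications and lets me reduce to the almost simple case $m=1$ (or a tightly controlled diagonal situation), where sharp estimates for $P_{G,N}(2)$ are cleanest. In the almost simple case, for $S=A_5$ one can compute $P_{G,N}(2)$ essentially exactly and check $60\cdot P_{G,N}(2)>35$ by hand; for all larger $S$ the probability is even closer to $1$ while $|N|$ is even larger, so the inequality is comfortable.

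The main obstacle will be making the lower bound on $P_{G,N}(2)$ genuinely \emph{uniform} and explicit enough across the two cases. The borderline case is the smallest one, $N\cong A_5$ with $|N|=60$: here I must verify numerically that $P_{G,N}(2)>35/60=7/12$, and this requires an honest count (or citation of a computed value) of the proportion of pairs in $G$ that generate, given they generate modulo $N$, over the finitely many almost simple extensions of $A_5$ with soluble outer part. For all $|S|>60$ the slack is large, so the worry is concentrated at $A_5$ (and perhaps $A_6$, $\mathrm{PSL}(2,7)$, $\mathrm{PSL}(2,8)$); I would dispatch these either by the explicit generation-probability tables or by a direct GAP-style computation, and treat the infinite families by the asymptotic estimate $P_{G,N}(2)\to 1$ together with the crude bound $|N|\geq 60$.
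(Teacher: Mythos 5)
Your strategy is, at its core, the same as the paper's: pass to the monolithic quotient $L=G/C_G(N)$, invoke a uniform lower bound on the conditional probability $P_{G,N}(2)$, and multiply by $|N|\geq 60$. The paper does this in two lines: by \cite[Theorem 17]{crowns}, solubility of $G/N$ gives $P_{G,N}(2)=P_{G/C_G(N),NC_G(N)/C_G(N)}(2)$, and by \cite[Theorem 1.1]{lon} this quantity is at least $53/90$, whence $\alpha(G,N)\geq 60\cdot\frac{53}{90}=\frac{106}{3}>35$. However, your execution has two genuine gaps. First, the structural claim that solubility of $G/N$ ``rules out the twisted wreath / product-action complications'' and reduces matters to the almost simple case $m=1$ is false. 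Take $G=\alt(5)\wr C_2=(\alt(5)\times\alt(5))\rtimes C_2$, which is 2-generated: here $N=\alt(5)\times\alt(5)$ is a minimal normal subgroup with $m=2$, $C_G(N)=1$, and $G/N\cong C_2$ is soluble, yet $L=G$ is of product type, not almost simple. The actual role of the solubility hypothesis is different: it is the hypothesis of the crown-theoretic transfer theorem that lets one compute $P_{G,N}(2)$ inside the monolithic group $L$, and it guarantees that $L/\soc(L)$ is soluble so that the bound of \cite{lon} applies. As written, your plan has no argument at all for the $m\geq 2$ configurations (though these are in fact the easy ones, since then $|N|\geq 60^2$ and one only needs $P_{G,N}(2)>35/|N|$).

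Second, the key quantitative input is left unproved. You need a lower bound on $P_{G,N}(2)$ exceeding $35/60=7/12$, uniform over \emph{all} monolithic groups with non-abelian socle and soluble top; this is precisely the content of \cite[Theorem 1.1]{lon}, where the constant is $53/90$ --- note how thin the margin is, $\frac{53}{90}\cdot 60=\frac{106}{3}\approx 35.3$, so nothing sloppy survives here. Your proposal to re-derive this by computing the socle-$\alt(5)$ (and a few other small) cases and handling ``the infinite families by the asymptotic estimate $P_{G,N}(2)\to 1$'' does not constitute a proof: an asymptotic statement gives no bound for any particular group, so one would need explicit estimates for every almost simple group \emph{and} for every product-type monolithic group (the latter being exactly the cases your reduction wrongly discarded). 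That uniform estimate is the hard work of an entire paper, not a finite computation plus a limit. The strategy is right; to close the argument you should cite the two results the paper uses --- the crown-theoretic identity of \cite{crowns} and the $53/90$ bound of \cite{lon} --- rather than attempt to reconstruct them.
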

\begin{proof}
	It follows from \cite[Theorem 17]{crowns} that if $G/N$ is soluble, then $$P_{G,N}(2)=P_{G/C_G(N),NC_G(N)/C_G(N)}(2).$$ Moreover
	by \cite[Theorem 1.1]{lon},
$$P_{G/C_G(N),NC_G(N)/C_G(N)}(2)\geq \frac{53}{90}.$$ Since $|N|\geq |\alt(5)|\geq 60,$
we conclude $$\alpha(G,N)=P_{G,N}(2)|N|\geq \frac{53}{90}\cdot 60>35.\qedhere$$
\end{proof}

\begin{lemma}\label{tre}
	Assume that $N$ is abelian. We have $|N|=p^a,$ where $p$ is a prime and $a$ is a positive integer. Let $c$ be the number of complements of $N$ in $G$.
	Then $$\alpha(G,N)=\frac{p^{2a}-c}{p^a}\geq p^a-p^{a-1}.$$
	In particular
	\begin{enumerate}
		\item $\alpha(G,N)=1$ if and only if $|N|=2$, $N$ has a complement in $G$ 
		and $G/N$ has an epimorphic image of order 2.
		\item $\alpha(G,N)=3/2$ if and only if $|N|=2,$  $N$ has a complement in $G$ 
		and no epimorphic image of $G/N$ has  order 2.
		\item $\alpha(G,N)\geq 2$ in all the remaining cases.
	\end{enumerate}
\end{lemma}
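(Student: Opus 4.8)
The plan is to compute $\alpha(G,N) = |N| P_{G,N}(2)$ directly by counting the pairs $(n_1,n_2) \in N \times N$ such that $\langle g_1 n_1, g_2 n_2 \rangle = G$, where $G = \langle g_1, g_2 \rangle N$ is fixed. Since $N$ is abelian, $N$ is elementary abelian of order $p^a$ for some prime $p$, because $N$ is a minimal normal subgroup; this justifies the stated form $|N| = p^a$. The key observation is that $P_{G,N}(2)$ admits a clean combinatorial interpretation: a pair generates $G$ iff the corresponding elements do not lie together in any complement-like configuration modulo $N$. I would set up the count by relating the generating pairs modulo $N$ to the complements of $N$ in $G$.

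First I would recall that since $N$ is a minimal normal abelian subgroup, $N$ is either a Frattini chief factor (in which case $N \leq \Phi(G)$ and every pair generating $G/N$ lifts to one generating $G$) or $N$ is complemented. When $N \leq \Phi(G)$ we have $c = 0$ and $P_{G,N}(2) = 1$, giving $\alpha(G,N) = p^a = (p^{2a} - 0)/p^a$, consistent with the formula. In the complemented case the count is governed by the number of ways $(g_1 n_1, g_2 n_2)$ can fail to generate, which happens precisely when both lie in a common complement. I would use the first-cohomology / crown-based counting (as in the cited work on Gasch\"utz-type formulas) to express $P_{G,N}(2)$ as $(p^{2a} - c)/p^{2a}$, so that multiplying by $|N| = p^a$ yields exactly $\alpha(G,N) = (p^{2a} - c)/p^a$. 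The main identity to nail down is that the number of non-generating lifts equals $c$, the number of complements.

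To prove the inequality $\alpha(G,N) \geq p^a - p^{a-1}$, I would bound the number of complements $c$. The complements of $N$ in $G$, when they exist, form a single conjugacy class (or more precisely are parametrized by a coset space governed by $H^1(G/N, N)$), and their number is at most the order of a derivation group, bounded by $p^{a(a-1)}$ type estimates; but the sharp bound I actually need is simply $c \leq p^a \cdot p^{a-1} = p^{2a-1}$, which gives $(p^{2a}-c)/p^a \geq (p^{2a} - p^{2a-1})/p^a = p^a - p^{a-1}$. The cleanest route is to show $c \leq p^{2a-1}$ by observing that the complements are permuted transitively by $N$ acting via conjugation, so $c$ divides $|N| = p^a$ times the number of $N$-conjugacy classes, and then controlling the stabilizer; alternatively one argues that not all $p^{2a}$ pairs can fail, and the proportion that fail is at most $p^{-1}$. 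I expect this bound on $c$ to be the main obstacle, since it requires the precise cohomological or counting structure rather than a soft argument.

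Finally, for the trichotomy in parts (1)--(3), I would analyze when equality or near-equality holds in $\alpha(G,N) \geq p^a - p^{a-1}$. The bound $p^a - p^{a-1}$ equals $1$ exactly when $p^a = 2$, i.e. $|N| = 2$; then $\alpha(G,N) = (4-c)/2$, so $\alpha(G,N) = 1$ forces $c = 2$ and $\alpha(G,N) = 3/2$ forces $c = 1$. Having $N$ complemented means $c \geq 1$, and $c = 2$ versus $c = 1$ is controlled by whether the complement is unique, which by the $H^1$ description corresponds to whether $G/N$ (equivalently $G$) has an epimorphic image of order $2$: two complements arise iff $\der(G/N, N) = \Hom(G/N, C_2)$ is nontrivial, i.e. iff $G/N$ surjects onto $C_2$. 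For all other cases, either $|N| = 2$ with $N$ not complemented (so $c = 0$, giving $\alpha = 2$) or $p^a \geq 3$ (giving $p^a - p^{a-1} \geq 2$), which yields part (3). I would verify each equivalence by translating the complement count $c$ into the cohomological language and checking the small cases $|N|=2$ explicitly.
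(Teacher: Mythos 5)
Your setup and the counting identity are the same as the paper's: the formula $P_{G,N}(2)=(p^{2a}-c)/p^{2a}$ is Gasch\"utz's theorem (\cite[Satz 2]{g2}), and your direct argument for it (a failing lift $(g_1n_1,g_2n_2)$ generates a complement, and each complement contains exactly one lift of the pair of cosets $(g_1N,g_2N)$) is sound. The genuine gap is exactly where you predict it: the bound $c\le p^{2a-1}$. Neither of your two proposed routes works. The complements of $N$ are \emph{not} in general permuted transitively by $N$-conjugation: if $|N|=2$ then $N$ is central, so conjugation by $N$ is trivial, and yet $c=2$ whenever $G/N$ has an epimorphic image of order $2$ (take $G=C_2\times C_2$, where the second factor and the diagonal are non-conjugate complements of the first factor); so transitivity fails precisely in the case your item (1) must handle, the conjugacy classes of complements being parametrized by $H^1(G/N,N)$, which need not vanish. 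Your alternative, that ``the proportion of pairs that fail is at most $p^{-1}$,'' is just a restatement of $c\le p^{2a-1}$, so it is circular; and the ``$p^{a(a-1)}$ type estimate'' for the number of complements is not a correct bound on the order of the derivation group.

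The missing idea, which is the heart of the paper's proof, is that $c$ is a \emph{power of $p$}. When $c\neq 0$, fix a complement $H$; the complements of $N$ in $G$ are in bijection with the group $\der(G/N,N)$ of derivations, and since $N$ is elementary abelian of exponent $p$, this group (under pointwise multiplication, using that $N$ is abelian) is itself an elementary abelian $p$-group; hence $c=|\der(G/N,N)|$ is a $p$-power. Since $G$ is $2$-generated, Gasch\"utz's independence result (the fact that $|\Phi_N(g_1,g_2)|$ does not depend on the choice of $g_1,g_2$ generating $G$ modulo $N$) guarantees that some lift of any generating pair of $G/N$ generates $G$, so $c<p^{2a}$; a $p$-power strictly smaller than $p^{2a}$ is at most $p^{2a-1}$, which gives the inequality. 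Once this is in place, your treatment of the trichotomy is essentially the paper's: for $|N|=2$ the subgroup $N$ is central, derivations coincide with homomorphisms, $2$-generation forces $c=|\Hom(G/N,C_2)|\in\{1,2\}$, and $c=2$ if and only if $G/N$ maps onto $C_2$; the paper phrases this by taking a complement $H$ and $K=H'H^2$, so that $c=|\Hom(H/K,N)|$ with $H/K\in\{1,C_2\}$ by $2$-generation.
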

\begin{proof}By \cite[Satz 2]{g2}, $P_{G,N}(2)=1-c/p^{2a},$ hence $\alpha(G,N)=\frac{p^{2a}-c}{p^a}$. If $c\neq 0,$ then $c$ is the order of the group $\der(G/N,N)$  of derivations from $G/N$ to $N$; in particular $c$ is a power of $p$. Moreover, since $G$ is 2-generated, it must be $c<p^{2a}$ and consequently
	$$\alpha(G,N)=\frac{p^{2a}-c}{p^a}\geq \frac{p^{2a}-p^{2a-1}}{p^a}=p^a-p^{a-1}.$$ In particular we can have $\alpha(G,N)<2$ only if $|N|=2$ and $c\neq 0.$
	Let $H$ be  a complement of $N$ in $G$ and let $K=H^\prime H^2.$ We have $c=|\der(H,N)|=|\Hom(H/K,N)|.$ Since $G$ is 2-generated, either $H/K=1$ or $H/K\cong C_2$. In the first case $c=1$ and $\alpha(G,N)=3/2,$ in the second case $c=2$ and $\alpha(G,N)=1.$
	\end{proof}

\section{Proof of Theorem \ref{main}}
	
Our proof of Theorem \ref{main} will rely on the following result in graph theory.

\begin{thm}{\cite[Corollary 10.21]{bm}}\label{crit} A planar graph with $n\geq 3$ vertices has at most $3n-6$ edges.
\end{thm}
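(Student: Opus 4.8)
The plan is to derive the bound from Euler's polyhedral formula together with a double-counting argument on face boundaries. Throughout I take ``graph'' to mean a \emph{simple} graph (no loops or multiple edges), as is needed for the statement to hold, and I call the graph $\Gamma$ as in the introduction. First I would reduce to the case of a connected graph: if $\Gamma$ is disconnected, one can insert additional edges joining distinct connected components without destroying planarity, and since this only increases the number of edges, it suffices to prove the bound $m\leq 3n-6$ for connected planar graphs.

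Next I would establish Euler's formula $n-m+f=2$ for a connected plane graph with $n$ vertices, $m$ edges and $f$ faces (counting the unbounded outer face), arguing by induction on $m$. For the base case, a connected graph with $m=n-1$ edges is a tree, which has a single face, so $n-(n-1)+1=2$. For the inductive step, if $\Gamma$ contains a cycle, deleting an edge $e$ lying on that cycle merges the two faces on either side of $e$ into one, decreasing both $m$ and $f$ by one while keeping the graph connected and planar; the formula then follows by induction.

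With Euler's formula in hand, the final step is the face-counting inequality. Each face of a plane graph is bounded by a closed walk, and summing the lengths of these boundary walks over all faces counts each edge exactly twice, so that $\sum_F \ell(F)=2m$. For a simple graph with $n\geq 3$ vertices I would argue that every face has boundary length $\ell(F)\geq 3$, since a shorter boundary would force either a loop or a pair of parallel edges. Hence $3f\leq 2m$, and substituting $f=2-n+m$ from Euler's formula gives $3(2-n+m)\leq 2m$, which rearranges to $m\leq 3n-6$.

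I expect the main technical care to lie in the face-length bound $\ell(F)\geq 3$, which must accommodate graphs that are not $2$-connected. In a tree or a graph with bridges a boundary walk may traverse the same edge twice, so the relevant quantity is the walk length counted with multiplicity rather than the number of distinct boundary edges; the point is that even so, for a connected simple graph on at least three vertices no boundary walk can have length $1$ or $2$, the former requiring a loop and the latter a multiple edge (or a single-edge graph on two vertices, excluded by $n\geq 3$). The degenerate cases $m\in\{0,1,2\}$ satisfy the inequality trivially for $n\geq 3$, and one may note in passing that equality $m=3n-6$ is attained precisely by the maximal planar (triangulated) graphs.
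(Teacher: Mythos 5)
Your proof is correct, but there is nothing in the paper to compare it against: the paper does not prove this statement at all, it simply quotes it as Corollary 10.21 of the cited textbook of Bondy and Murty. Your argument --- reduction to the connected case, Euler's formula $n-m+f=2$ proved by induction on the number of edges (with trees as the base case), and the double-counting inequality $\sum_F \ell(F)=2m\geq 3f$ for simple plane graphs on $n\geq 3$ vertices --- is the standard textbook proof, essentially the one found in the cited reference, so in effect you have supplied the proof the paper delegates to the literature. You also handle the one genuine subtlety correctly: in graphs that are not $2$-connected a face boundary walk may traverse a bridge twice, so the bound of $3$ must be placed on the walk length counted with multiplicity (ruling out lengths $1$ and $2$ via loops, parallel edges, or the excluded case $n=2$) rather than on the number of distinct boundary edges.
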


\begin{proof}[Proof of Theorem \ref{main}]First we prove that if $G\in \{C_2, C_3, C_4, C_5, C_6, C_2\times C_2, D_3, D_4,$ $Q_8, C_4\times C_2, D_6\},$ then $\Gamma(G)$ is planar. Let $\Delta(G)$ be the subgraph of $\Gamma(G)$ obtained by removing the isolated  vertices. Clearly $\Gamma(G)$ is planar if and only if $\Delta(G)$ is planar. We have $\Delta(C_2)=K_1$,  $\Delta(C_3)=K_2$, $\Delta(C_4)=K_3$, $\Delta(C_5)=K_4$	and $\Delta(C_2\times C_2)=K_3$ (as usual, we denote by $K_n$ the complete graph on $n$ vertices). 
If $G=\langle g \rangle$ is cyclic of order $6,$ then $\Gamma(G)=\Delta(G)$ can be  drawn as follows:
\begin{center}
	\begin{tikzpicture}
	[scale=.6,auto=left,every node/.style={circle,fill=black!10}]
	\node (n4) at (5,9)  {$g^5$};
	\node (n1) at (3,7) {$g^2$};
	\node (n2) at (5,7)  {$g^3$};
	\node (n3) at (5,5)  {$g$};
	\node (n5) at (7,7)  {$g^4$};
	\foreach \from/\to in {n1/n3,n1/n2,n2/n3,n1/n4,n2/n4,n3/n5,n4/n5,n2/n5}
	\draw (\from) -- (\to);
	;
	\end{tikzpicture}
\end{center}
If $G$ is a non-cyclic group of order 8, then $\Delta(G)$ has 12 edges and 6 vertices and is isomorphic to $\Delta(Q_8)$, which is planar as indicated by the following picture.
\begin{center}
	\begin{tikzpicture}
	[scale=.5,auto=left,every node/.style={circle,fill=black!20}]
	\node (n6) at (2,9) {$-j$};
	\node (n4) at (5,9)  {$k$};
	\node (n5) at (8,9)  {$-i$};
	\node (n1) at (3.5,7) {$i$};
	\node (n2) at (6.5,7)  {$j$};
	\node (n3) at (5,5)  {$-k$};
	\foreach \from/\to in {n1/n3,n1/n2,n2/n3,n1/n4,n2/n4,n4/n5,n2/n5,n4/n6,n1/n6}
	\draw (\from) -- (\to);
	\path
	(n5) edge[bend right=50] (n6)
	(n6) edge[bend right=50] (n3)
	(n5) edge[bend left=50] (n3)
	;
	\end{tikzpicture}
\end{center}
Let $G=D_3=\langle a, b \mid a^3, b^2, abab \rangle$. Then $\Delta(G)$ can be drawn as follows:
\begin{center}
	\begin{tikzpicture}
	[scale=.6,auto=left,every node/.style={circle,fill=black!20}]
	\node (n1) at (5,2.5)  {$b$};
	\node (n2) at (5,7.5) {$ba$};
	\node (n3) at (7,5)  {$ba^2$};
	\node (n4) at (5,5)  {$a$};
	\node (n5) at (9,5) {$a^2$};
	\foreach \from/\to in {n1/n3,n2/n3,n1/n4,n2/n4,n1/n5,n2/n5,n4/n3,n5/n3}
	\draw (\from) -- (\to);
	\path
(n1) edge[bend left=60] (n2);
	\end{tikzpicture}
\end{center}
Let $G=D_6=\langle a, b \mid a^3, b^2, abab \rangle$. Then $\Delta(G)$ can be drawn as follows:
\begin{center}
	\begin{tikzpicture}
	[scale=.7,auto=left,every node/.style={circle,fill=black!20}]
	\node (n1) at (1,5)  {$b$};
	\node (n2) at (2.5,5) {$ba$};
	\node (n3) at (4,5)  {$ba^2$};
	\node (n4) at (5.5,5)  {$ba^3$};
	\node (n5) at (7,5) {$ba^4$};
	\node (n6) at (8.5,5) {$ba^5$};
	\node (n7) at (4.75,3) {$a$};
	\node (n8) at (4.75,7) {$a^5$};
	\foreach \from/\to in {n1/n2,n2/n3,n3/n4,n4/n5,n5/n6,n1/n7,n2/n7,n3/n7,n4/n7,n5/n7,n6/n7,n1/n8,n2/n8,n3/n8,n4/n8,n5/n8,n6/n8}
	\draw (\from) -- (\to);
	\path
	(n1) edge[bend right=100] (n6);
	\end{tikzpicture}
\end{center}

Assume now that $G$ is a 2-generated finite group and let $e(G)$ be the number of vertices of the generating graph $\Gamma(G).$	We have $$e(G)=\frac{|G|^2P_G(2)}{2},$$ denoting by $P_G(k)$ the probability that $k$ randomly chosen elements generate $G.$

Assume that $\Gamma(G)$ is a planar graph. Then, by Theorem \ref{crit},
$$\frac{|G|^2P_G(2)}{2}\leq 3|G|-6.$$
In particular 
\begin{equation}\label{main0}
|G|P_G(2)<6.
\end{equation}

Let  $1=N_t\leq \dots \leq N_0=G$ be a chief series of $G$. We have
$$P_G(2)=\prod_{1\leq i \leq t}P_{G/N_i,N_{i-1}/N_{i}}(2) \quad \text{and} \quad |G|=\prod_{1\leq i \leq t}|N_{i-1}/N_i|.$$
So, setting $\alpha_i=|N_{i-1}/N_i|P_{G/N_i,N_{i-1}/N_{i}}(2)=\alpha(G/N_i,N_{i-1}/N_i)$ for $1\leq i\leq t,$ we deduce from (\ref{main0}) that
\begin{equation}\label{impo}
\prod_{1\leq i\leq t}\alpha_i < 6.
\end{equation}
If follows from Lemmas \ref{uno} and \ref{tre}, that $\alpha_i\geq 1$ for every $1\leq i \leq t.$ Hence we deduce from (\ref{impo}) that $\alpha_i<6$ for  every $1\leq i \leq t.$ This implies that $G$ is soluble. Otherwise we could find $j$ such that $G/N_{j-1}$ is soluble and $N_{j-1}/N_j$ is non-abelian, and therefore $\alpha_j>35$ by Lemma \ref{due}.

Assume that $G$ is cyclic of order $n\geq 7.$ Then $\phi(n)\geq 4,$ so there exist four different elements $g_1,g_2,g_3,g_4$ with $G=\langle g_i\rangle$ for
$1\leq i \leq 4.$ Choose $x\in G\setminus \{1,g_1,g_2,g_3,g_4\}.$ The subgraph of $\Gamma(G)$ induced on the subset $\{g_1,g_2,g_3,g_4,x\}$ is isomorphic to $K_5,$ so $\Gamma(G)$ is not planar. We may so assume that $G$ is not cyclic.

Firstly, assume that $C_2\times C_2$ is an epimorphic image of $G.$ It is not restrictive to assume $N_0/N_2\cong C_2 \times C_2$. Lemma \ref{tre} implies $\alpha_1=3/2$, $\alpha_2=1$ and $\alpha_3\geq 2$ if $j>2$ (notice that if $j>2$ and $N_{j-1}/N_j$ has order 2, then $N_{j-1}/N_j$ cannot be complemented in $G/N_j$, otherwise $C_2^3$ would be an epimorphic image of $G$ and $G$ could not be generated by 2 elements). By (\ref{impo}), we must have $t\leq 3$. If $t=2,$ then $G\cong C_2\times C_2.$ If $t=3$, then $\alpha_3 <4$ and, again by Lemma \ref{tre}, $|N_2|\leq 4$. If $|N_2|=2,$ then $G$ is a non-cyclic 2-generated group of order 8, i.e. $G\in \{C_4\times C_2, D_4, Q_8\}$. The possibility $|N_2|=4$ cannot occur: $G$ would be a group of order 16 and it could not contain a minimal normal subgroup of order $4$.
Assume $|N_2|=3.$ If $N_2$ is non-central in $G$, then $G\cong D_6$. If $N_2\leq Z(G)$, then $G\cong C_2\times C_2\times C_3$: in this case $\Delta(G)$ has 9 vertices and 24 edges: since $24>3 \cdot 9-6=21$,
$\Delta(G)$ is not planar by Theorem \ref{crit}.

Finally assume that $C_2\times C_2$ is not an epimorphic images of $G$. In this case, again by Lemma \ref{tre},  $\alpha_1\geq 3/2$ and $\alpha_j\geq 2$ if $j > 1$. 
 By (\ref{impo}), we must have $t\leq 2$, and consequently $t=2$ since we are assuming that $G$ is not cyclic. By Lemma  \ref{tre}, $|N_1|\leq 4.$ Since $G$ is not cyclic, we remain with the following possibilities: $G\cong C_3\times C_3,$ $G \cong \alt(4)$, $G\cong D_3$. The first two cases can be excluded by Theorem \ref{crit}: $\Delta(\alt(4))$ has 48 edges and 11 vertices, $\Delta(C_3\times C_3)$ has 24 edges and 9 vertices.
\end{proof}

\end{document}